\definecolor{dark_green}{rgb}{0.0, 0.7, 0.0}
\numberwithin{equation}{section}
\title{A generalization of Hardy's operator and an asymptotic M\"untz-Sz\'asz Theorem}
\newcommand{\mc}{M\raise.45ex\hbox{c}Carthy}
\author{Jim Agler
\and
John E. M\raise.5ex\hbox{c}Carthy
\thanks{Partially supported by National Science Foundation Grant  
DMS 2054199}
}
\def\h{\mathcal{H}}
\def\be{\begin{equation}}
\def\ee{\end{equation}}
\def\norm#1{\| #1 \|}
\def\m{\mathcal{M}}
\newcommand\cmi{{\mathcal M}_\infty^{\rm unif}}
\def\h{\mathcal{H}}
\def\ltwo{\ell^2}
\def\set#1#2{\{ #1 \, | \, #2\}}
\def\ltwo{{\rm L}^2([0,1])}
\def\ltwos{{\rm L}^2([s,1])}
\def\ip#1#2{\langle\, #1\, ,#2\, \rangle}
\newcommand{\twopartdef}[4]
{
	\left\{
		\begin{array}{ll}
			#1 & \mbox{if } #2 \\
			#3 & \mbox{if } #4
		\end{array}
	\right.
}
\DeclareMathOperator{\spn}{span}
\DeclareMathOperator{\dist}{dist}
\renewcommand\={\ = \ }
\newcommand\M{{\mathcal M}}
\begin{document}

\bibliographystyle{plain}
\theoremstyle{definition}
\newtheorem{defin}[equation]{Definition}
\newtheorem{lem}[equation]{Lemma}
\newtheorem{prop}[equation]{Proposition}
\newtheorem{thm}[equation]{Theorem}
\newtheorem{claim}[equation]{Claim}
\newtheorem{ques}[equation]{Question}
\newtheorem{fact}[equation]{Fact}
\newtheorem{axiom}[equation]{Technical Axiom}
\newtheorem{newaxiom}[equation]{New Technical Axiom}
\newtheorem{cor}[equation]{Corollary}
\newtheorem{exam}[equation]{Example}
\newtheorem{rem}[equation]{Remark}
\maketitle

\section{Overview} In this note we shall give a novel proof that Hardy's Operator $H$, defined on $\ltwo$ by the formula,
\[
Hf (x) \ =\ \frac{1}{x} \int_0^x f(t) dt,\qquad x\in (0,1],
\]
is bounded. This new proof relies only on algebra together with the observation that the monomial functions are eigenvectors for $H$. Specifically, for each $k\ge 0$,
\be\label{over.10}
H x^k = \frac{1}{k+1}x^k.
\ee
Always intrigued by results in analysis whose proofs rely mainly on algebra, the new proof of Hardy's Inequality prompts the authors to propose the following definition.
\begin{defin}\label{def.over.10}
We say that $T$ is a \emph{monomial operator} if $T$ is a bounded operator on $\ltwo$ and there exist a  number $m$ and a sequence of scalars $c_0,c_1,c_2\ldots$ such that for all $k$,
\be\label{over.20}
T x^k =c_k x^{k+m}.
\ee
\end{defin}
We shall call the number $m$ in \eqref{over.20} the {\em order} of $T$. It can be any complex number with non-negative real part, though in all our examples it will be a natural number.
In addition to $H$, a monomial operator of order $0$, 
other examples of monomial operators are the multiplication operator $M_x$ that sends a function $f(x)$ 
to the function $x f(x)$, and the Volterra operator $V = M_x H$, the  operator given by
\[
Vf (x) \ =\ \int_0^x f(t) dt,\qquad x\in (0,1].
\]
Both $M_x$ and $V$ are of order $1$.

For $ 0 \leq s \leq 1$ we shall use $L^2[s,1]$ to denote the closed subspace of $L^2[0,1]$ of functions that
are $0$ on $[0,s]$.
\begin{defin}
We shall say that a bounded operator $T: L^2[0,1] \to L^2[0,1]$ is {\em vanishing preserving}
if $T L^2[s,1] \subseteq L^2[s,1]$ for every $s$ in $[0,1)$.
\end{defin}

In this note we shall prove the following result.
\begin{thm}\label{thm.over.10}
If $T$ is a monomial operator, then $T$ is vanishing preserving.
\end{thm}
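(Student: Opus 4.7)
The plan is to exhibit a clean intertwining relation between $T$ and a one-parameter family of dilations, and then deduce vanishing preservation in one line. For each $s \in (0,1]$, define the dilation $D_s : L^2[0,1] \to L^2[0,1]$ by $(D_s f)(x) = f(sx)$. A change of variables shows that $D_s$ is bounded with $\|D_s\| \leq s^{-1/2}$, and a direct computation gives $D_s x^k = s^k x^k$ for every $k \geq 0$.

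The first step is to establish the intertwining identity
\[
D_s T \= s^m T D_s \qquad \text{on } L^2[0,1].
\]
Using the monomial eigenvalue relation \eqref{over.20},
\[
D_s T x^k \= D_s(c_k x^{k+m}) \= c_k s^{k+m} x^{k+m}, \qquad s^m T D_s x^k \= s^m T(s^k x^k) \= c_k s^{k+m} x^{k+m}.
\]
Both sides agree on every monomial, hence on the polynomials by linearity. Since $D_s T$ and $s^m T D_s$ are bounded operators on $L^2[0,1]$ (as $T$ and $D_s$ are bounded) and the polynomials are dense, the identity extends to all of $L^2[0,1]$.

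The second step is short. The case $s = 0$ is trivial, so fix $s \in (0,1)$ and let $f \in L^2[s,1]$. Since $sx \in [0,s]$ for every $x \in [0,1]$, and $f$ vanishes a.e.\ on $[0,s]$, the function $D_s f(x) = f(sx)$ vanishes a.e., that is, $D_s f = 0$ in $L^2[0,1]$. The intertwining then gives
\[
D_s (Tf) \= s^m T(D_s f) \= 0,
\]
so $(Tf)(sx) = 0$ for a.e.\ $x \in [0,1]$. The substitution $y = sx$ yields $(Tf)(y) = 0$ for a.e.\ $y \in [0,s]$, which is precisely the statement that $Tf \in L^2[s,1]$.

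The proof is essentially algebraic; I do not foresee a serious obstacle beyond the routine density-plus-continuity step that extends the intertwining from the polynomials to all of $L^2[0,1]$. The bounded sequence of eigenvalues $\{c_k\}$ never needs to be analyzed further than through \eqref{over.20}.
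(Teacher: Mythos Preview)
Your proof is correct and considerably more direct than the paper's. The paper proves Theorem~\ref{thm.over.10} by first developing an Asymptotic M\"untz--Sz\'asz Theorem (Theorem~\ref{assym.thm.10}): given $f\in\ltwos$, one chooses $N_n$ so that $\rho_n\to\sqrt s$, approximates $f$ in $L^2$ by polynomials $p_n\in\m_n$, applies $T$ to get $Tp_n=x^m\sum_{k=n}^{n+N_n} d_k x^k$, and then invokes Theorem~\ref{assym.thm.10} a second time (after peeling off the invertible factor $x^m$ on $\ltwos$) to conclude that the limit $Tf$ lies in $\ltwos$. That argument in turn rests on the Baby Brodskii--Donoghue Lemma, the Cauchy determinant formula, and the Gram distance formula.

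By contrast, your dilation intertwining $D_sT=s^{m}TD_s$ bypasses all of that machinery: it is checked on monomials, extended by density, and then the single observation that $D_s$ annihilates $\ltwos$ finishes the job. This is precisely the kind of ``algebra plus one density argument'' the introduction advertises, and it handles complex $m$ with $\mathrm{Re}\,m\ge 0$ without modification, since $|s^m|=s^{\mathrm{Re}\,m}\le 1$. What the paper's longer route buys is the Asymptotic M\"untz--Sz\'asz Theorem itself, which the authors explicitly flag as possibly ``more interesting than the theorem itself''; your argument gives no information about $\m_\infty$, but if the goal is solely Theorem~\ref{thm.over.10}, yours is the cleaner path.
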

Why might such a theorem be true? If $T$ is a monomial operator, and $f$ is a polynomial  that
vanishes at $0$ to some high order $M$, then $Tf$ also vanishes to order at least $M$. So if one thinks of 
vanishing on $[0,s]$ as an extreme case of vanishing to high order, one might believe that monomial operators preserve this property.



Our proof of Theorem \ref{thm.over.10}  relies on a new type of M\"untz-Sz\'asz Theorem, wherein the monomial sequence is allowed to drift. This may be more interesting than the theorem itself!

\section{Hardy's Inequality}
For a continuous function $f$ on $[0,1]$ consider the continuous function $Hf$ on $(0,1]$ defined by the formula
\be\label{hard10}
Hf (x) =\frac{1}{x} \int_0^x f(t) dt,\qquad x\in (0,1].
\ee
Noting that as $x \to 0$, $\frac{1}{x} \to \infty$ and $\int_0^x f(t)dt \to 0$, the following question arises:
\[
\text{How does $Hf$ behave near 0?}
\]

Invoking the Mean Value Theorem for Integrals yields that for each $x\in [0,1]$, there exists $c\in [0,x]$ such that $Hf(x)=f(c)$. Thus,
\be\label{hard20}
|Hf(x)|\le \max_{t\in [0,x]}|f(t)|,\qquad x\in (0,1],
\ee
so that in particular, $Hf$ is bounded near 0.
More delicately, if we apply the MVT to the function $f(x)-f(0)$, we obtain the estimate
\be\label{hard30}
|Hf(x)-f(0)|\le \max_{t \in [0,x]}|f(t)-f(0)|,\qquad x\in (0,1],
\ee
which implies that $Hf(x) \to f(0)$ as $x \to 0$. Therefore, if we agree to extend the definition of $Hf$ at the point $x=0$ by setting $Hf(0)=f(0)$, then our observations imply the following proposition.
\begin{prop}
If $f$ is a continuous function on $[0,1]$, then $Hf$ is a continuous function on $[0,1]$. Furthermore,
\be
\max_{x \in [0,1]} |Hf(x)|\le \max_{x \in [0,1]} |f(x)|.
\ee
\end{prop}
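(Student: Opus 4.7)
The plan is to exploit the two inequalities \eqref{hard20} and \eqref{hard30} already established in the excerpt, together with the extension $Hf(0) := f(0)$ suggested in the surrounding discussion. Once this extension is in place, the proposition splits cleanly into two assertions: continuity of $Hf$ on $[0,1]$, and the sup-norm bound.

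For continuity I would handle the two regions $(0,1]$ and $\{0\}$ separately. On $(0,1]$ the antiderivative $F(x) := \int_0^x f(t)\,dt$ is continuous (indeed $C^1$) by the Fundamental Theorem of Calculus, and $x \mapsto 1/x$ is continuous on $(0,1]$, so $Hf(x) = F(x)/x$ is continuous there as a product of continuous functions. At $x = 0$, continuity is precisely the content of \eqref{hard30}: since $f$ is continuous at $0$, $\max_{t \in [0,x]}|f(t) - f(0)| \to 0$ as $x \to 0^+$, and \eqref{hard30} then forces $Hf(x) \to f(0) = Hf(0)$.

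For the sup-norm inequality, \eqref{hard20} gives directly
\[
|Hf(x)| \ \le\ \max_{t \in [0,x]} |f(t)| \ \le\ \max_{t \in [0,1]} |f(t)|, \qquad x \in (0,1],
\]
while at $x = 0$ one has $|Hf(0)| = |f(0)| \le \max_{t \in [0,1]} |f(t)|$ by the extended definition. Taking the maximum of the left-hand side over $x \in [0,1]$ yields the claimed inequality.

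I do not anticipate a genuine obstacle: the statement is essentially a clean packaging of the Mean Value Theorem for Integrals estimates that precede it. The only point requiring care is to make explicit the extension $Hf(0) := f(0)$, since as originally defined $Hf$ lives only on $(0,1]$, and without this convention the very phrase \emph{``$Hf$ is continuous on $[0,1]$''} would be meaningless.
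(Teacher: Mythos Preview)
Your proposal is correct and follows essentially the same approach as the paper: the paper presents the argument as the discussion immediately preceding the proposition, deriving \eqref{hard20} and \eqref{hard30} from the Mean Value Theorem for Integrals, noting that \eqref{hard30} gives continuity at $0$ under the extension $Hf(0)=f(0)$, and taking continuity on $(0,1]$ as evident. The only addition you make is spelling out the continuity on $(0,1]$ via the Fundamental Theorem of Calculus, which the paper simply asserts in passing.
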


Hardy \cite{har20} was the first to study the local behavior of $H$ at $0$ for functions  equipped with norms other than the supremum norm. His result when specialized to $\ltwo$, the Hilbert space of square integrable Lesbesgue measurable functions on $[0,1]$, is as follows.
\begin{prop}{\bf (Hardy's Inequality in $\ltwo$)}
\label{hard.prop.10}
If $f$ is a measurable function on $[0,1]$, then $Hf$ is a measurable function on $[0,1]$, and
\be
\label{eqH4}
\int_0^1 |Hf(x)|^2 dx \le 4 \int_0^1 |f(x)|^2 .
\ee
\end{prop}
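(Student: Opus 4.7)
The plan is to prove the inequality first on polynomials, exploiting the eigenvalue identity \eqref{over.10}, and then extend by density. By the Weierstrass approximation theorem the polynomials are dense in $\ltwo$, so it suffices to establish $\|Hp\|^2 \le 4\|p\|^2$ for every polynomial $p$. Writing $p(x) = \sum_{k=0}^n a_k x^k$, identity \eqref{over.10} gives $Hp(x) = \sum_{k=0}^n \frac{a_k}{k+1} x^k$, and the Gram identities $\langle x^j, x^k\rangle = \frac{1}{j+k+1}$ convert both $\|p\|^2$ and $\|Hp\|^2$ into Hermitian quadratic forms in the coefficient vector $a = (a_0,\ldots,a_n)$.

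Thus the inequality is equivalent to positive semidefiniteness (on finitely supported sequences) of the matrix
\[
M_{jk} := \frac{4}{j+k+1} - \frac{1}{(j+1)(k+1)(j+k+1)}.
\]
The algebraic heart of my plan is the identity
\[
4(j+1)(k+1) - 1 = (2j+1)(2k+1) + 2(j+k+1),
\]
which yields the clean decomposition
\[
M_{jk} = \frac{(2j+1)(2k+1)}{(j+1)(k+1)(j+k+1)} + \frac{2}{(j+1)(k+1)}.
\]

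Each summand is visibly positive semidefinite. The second is the rank-one matrix $2 v v^*$ with $v_k = 1/(k+1)$. The first is the Hilbert matrix $\bigl[\tfrac{1}{j+k+1}\bigr]$ conjugated by the positive diagonal matrix with entries $\tfrac{2k+1}{k+1}$, and the Hilbert matrix is itself positive semidefinite because it is the Gram matrix of the monomials in $\ltwo$. The principal obstacle in this plan is spotting the algebraic identity above; once it is in hand, the remainder of the argument is bookkeeping, and density of polynomials extends the estimate from polynomials to all of $\ltwo$, completing the proof.
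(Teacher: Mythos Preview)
Your argument is correct. The algebraic identity $4(j+1)(k+1)-1=(2j+1)(2k+1)+2(j+k+1)$ does give the stated decomposition of $M_{jk}$, and each summand is positive semidefinite for exactly the reasons you give; the density step is routine (the same closure argument is used implicitly in the paper's proof of the key lemma).

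The route, however, is genuinely different from the paper's. The paper does not attack $4I-H^{*}H\ge 0$ directly. Instead it proves the exact operator identity
\[
\|f\|^{2}=\|(1-H)f\|^{2}+\Big|\int_{0}^{1}f\Big|^{2},
\]
which in matrix form is the cleaner single identity $\tfrac{1}{j+k+1}-\tfrac{jk}{(j+1)(k+1)(j+k+1)}=\tfrac{1}{(j+1)(k+1)}$. From this one reads off $\|1-H\|\le 1$ and concludes $\|H\|\le 2$ by the triangle inequality. Your approach buys directness---you never pass through $1-H$---at the price of a slightly more intricate identity and a decomposition into two positive pieces rather than one rank-one remainder. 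The paper's identity buys extra structural information: it exhibits $(1-H)$ as norm at most one with a rank-one defect given precisely by the functional $f\mapsto\int_{0}^{1}f$, which is sharper than the bare inequality and is the ``sum of squares'' phenomenon the paper advertises.
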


A linear operator $T$ on a normed vector space ${\mathcal V}$ is called {\em bounded}\footnote{It is a straightforward exercise to show that a linear operator is bounded if and only if it is continuous.} if there is some constant $C$ so that
\be
\label{bded}
\| T v \| \ \leq \ C \| v \| \qquad \forall v \in {\mathcal V} .
\ee
The infimum of all $C$ for which \eqref{bded} holds is called the norm of $T$, and written $\| T \|$.
Using this terminology, 
 \eqref{eqH4} says $\norm{H}\le 2$.

Our proof of Proposition \ref{hard.prop.10} in Section \ref{secd} relies on  a new ``sum of squares identity" involving the operator $H$, proved using \eqref{over.10}.
\section{ Hilbert Space distance formula}
Let $h_1,\ldots,h_n$ be $n$ vectors in a hilbert space $\h$. We may associate to these vectors their \emph{Gram matrix}, i.e.,  the $n \times n$ matrix $G[h_1,\ldots,h_n]$ defined by
\[
\big[ G[h_1,\ldots,h_n] \big]_{ij} \= \big[\ \ip{h_j}{h_i}_{\h}\ \big].
\]
An often used application is the following elegant formula for the distance to the span of $h_1,\ldots,h_n$.
\begin{thm}\label{gram.thm.10}
{\bf (Hilbert Space Distance Formula)}
If $\h$ is a Hilbert space, $h\in \h$, and $h_1,h_2,\ldots, h_N \in\h$ are linearly independent, then
\be
\label{eqc10}
\dist(h,\spn \{h_1,h_2,\ldots,h_N\})\=\sqrt{\frac{\det G[h,h_1,h_2,\ldots,h_N]}{\det G[h_1,h_2,\ldots,h_N]}}.
\ee
\end{thm}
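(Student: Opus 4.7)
The plan is to reduce the formula to an orthogonal decomposition of $h$ against the span, and then compute the Gram determinant by elementary row and column operations. First I would let $p$ denote the orthogonal projection of $h$ onto $\spn\{h_1,\ldots,h_N\}$, write $p = \sum_{j=1}^N c_j h_j$ (the coefficients exist and are unique because $h_1,\ldots,h_N$ are linearly independent), and set $r = h - p$. Then $r \perp h_j$ for $j = 1, \ldots, N$, and by definition of the projection, $\dist(h, \spn\{h_1,\ldots,h_N\}) = \|r\|$. So the goal reduces to showing
\[
\|r\|^2 \= \frac{\det G[h,h_1,\ldots,h_N]}{\det G[h_1,\ldots,h_N]}.
\]

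Next I would manipulate the numerator determinant. Consider $G[h,h_1,\ldots,h_N]$. Replace the first row by the first row minus $\sum_j \overline{c_j}$ times the $(j+1)$st row; this changes the $(1,k)$ entry from $\ip{h_k}{h}$ to $\ip{h_k}{h - p} = \ip{h_k}{r}$ for $k \ge 1$ (and from $\ip{h}{h}$ to $\ip{h}{r}$ in position $(1,1)$). Row operations of this form do not change the determinant. Then perform the analogous column operation: subtract $\sum_j c_j$ times the $(j+1)$st column from the first column. After both operations, the $(1,k)$ entry for $k\ge 2$ becomes $\ip{h_{k-1}}{r} = 0$, the $(k,1)$ entry becomes $\ip{r}{h_{k-1}} = 0$, and the $(1,1)$ entry becomes $\ip{r}{r} = \|r\|^2$. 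The lower-right $N\times N$ block is untouched and equals $G[h_1,\ldots,h_N]$.

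The matrix is now block-triangular (in fact block-diagonal), so its determinant is $\|r\|^2\cdot\det G[h_1,\ldots,h_N]$. Since row/column operations preserved the determinant, this equals $\det G[h,h_1,\ldots,h_N]$, and rearranging gives the claim. The only technical point worth flagging is that $\det G[h_1,\ldots,h_N] \ne 0$, which is not an obstacle but should be noted: it follows from linear independence of the $h_j$, since a null vector of the Gram matrix would provide coefficients of a nontrivial linear combination of zero norm.

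I would not expect any real obstacle in executing this plan; the main thing to be careful about is the conjugation conventions in the row versus column reduction (one uses $c_j$, the other $\overline{c_j}$) so that the cancellation producing the zero entries works out correctly with our sesquilinear inner product.
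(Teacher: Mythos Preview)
Your proposal is correct and follows essentially the same approach as the paper: both decompose $h$ orthogonally as projection plus remainder and then reduce $\det G[h,h_1,\dots,h_N]$ to $\|r\|^2\det G[h_1,\dots,h_N]$. The only cosmetic difference is that the paper packages the reduction as the determinant identity $\det G[h,\dots]=\det G[k,\dots]+\det G[m,\dots]$ (using linearity in the first row and that the first term vanishes), whereas you carry out the equivalent row/column elimination explicitly; your column step is in fact redundant, since after the row operation the first row is already $(\|r\|^2,0,\dots,0)$.
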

\begin{proof}
Write
$h = k+ m$, where $k$ is in the span of $\{ h_1, \dots, h_N \}$ and $m$ is perpendicular to the span.
Then $\| m\| = \dist(h,\spn \{h_1,h_2,\ldots,h_N\})$.

We can write
\[
\det G[h,h_1,h_2,\ldots,h_N] \= \det G[k,h_1,h_2,\ldots,h_N] + \det G[m ,h_1,h_2,\ldots,h_N] .
\]
Since $k$ is in the span of $\{ h_1, \dots, h_N \}$, we have
\[
\det G[k,h_1,h_2,\ldots,h_N] \ = \ 0.
\]
Moreover,
$ G[m ,h_1,h_2,\ldots,h_N]$ is a  matrix whose first row is
$( \ip{m}{m}, 0 , \dots, 0 )$. Therefore
\[
\det G[m ,h_1,h_2,\ldots,h_N] \= \| m \|^2 \det G[h_1,h_2,\ldots,h_N .
\]
Combining these observations, we get \eqref{eqc10}.
\end{proof}
\section{A Hilbert space proof of Hardy's Inequality}
\label{secd}

The key step in our proof is the following lemma.
\begin{lem}{\bf An Identity for $H$}
\label{lemd1}
\[
\norm{f}^2 \= \norm{(1-H)f}^2 + \left|\int_0^1 f(x) dx\right|^2 \qquad \forall f \in \ltwo.
\]
\end{lem}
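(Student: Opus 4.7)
The plan is to verify the identity on the dense subspace of polynomials using the eigenvalue relation \eqref{over.10}, and then extend to all of $\ltwo$ by density. Since $(1-H)x^k=\tfrac{k}{k+1}x^k$ and $\ip{x^k}{1}=\tfrac{1}{k+1}$, the identity for the single monomial $f=x^k$ reduces to
\[
\frac{1}{2k+1}\=\frac{k^2}{(k+1)^2(2k+1)}+\frac{1}{(k+1)^2},
\]
which is just the restatement of $k^2+(2k+1)=(k+1)^2$.

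To lift this to arbitrary polynomials I would polarize. Writing both sides of the claimed identity as double sums in the coefficients of $f=\sum a_k x^k$ and matching coefficients, it suffices to verify for each $j,k\ge 0$ that
\[
\ip{x^j}{x^k}\=\ip{(1-H)x^j}{(1-H)x^k}+\ip{x^j}{1}\,\overline{\ip{x^k}{1}},
\]
and, once each inner product is computed explicitly, this collapses to the algebraic equality $jk+j+k+1=(j+1)(k+1)$. So the identity holds on every polynomial, and in particular $\norm{(1-H)p}\le\norm{p}$ for all polynomials $p$. By density of polynomials in $\ltwo$, the map $p\mapsto(1-H)p$ extends uniquely to a bounded operator $T$ on $\ltwo$ with $\|T\|\le 1$, and the identity passes to the limit for this extension.

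The main obstacle, and what makes this proof strategy attractive, is that $Hf$ is not \emph{a priori} known to be in $\ltwo$ for an arbitrary $f\in\ltwo$ --- that is precisely the content of Hardy's inequality, which we are trying to prove. So the final step is to identify the abstract extension $T$ with $1-H$ on all of $\ltwo$, where $Hf$ is defined pointwise by \eqref{hard10}. For polynomial approximants $p_n\to f$ in $\ltwo$, Cauchy--Schwarz gives $|Hp_n(x)-Hf(x)|\le\norm{p_n-f}/\sqrt{x}$ for every $x\in(0,1]$, so $Hp_n\to Hf$ pointwise on $(0,1]$; combined with $\ltwo$-convergence of $(1-H)p_n$ to $Tf$ (hence a.e.\ along a subsequence), this forces $Tf=(1-H)f$ a.e. The density argument thereby delivers both membership of $(1-H)f$ in $\ltwo$ and the identity in one stroke, with the polarized monomial computation providing the algebraic input that makes the bootstrap work.
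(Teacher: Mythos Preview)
Your proof is correct and follows essentially the same route as the paper: verify the identity on polynomials via the eigenvalue relation $Hx^k=\frac{1}{k+1}x^k$, reduce to the algebraic identity $\frac{1}{j+k+1}-\frac{jk}{(j+1)(k+1)(j+k+1)}=\frac{1}{(j+1)(k+1)}$ (equivalently $jk+j+k+1=(j+1)(k+1)$), and extend by density. Your treatment of the density step is in fact more careful than the paper's, which simply asserts that polynomial density suffices without addressing the apparent circularity that $H$ is not yet known to be bounded; your pointwise-convergence argument for $Hp_n\to Hf$ on $(0,1]$ fills exactly that gap.
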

\begin{proof}
It suffices to show for $f$ a polynomial, since the polynomials are dense in $L^2[0,1]$. If
\[
f(x)\ =\sum_{j=0}^n a_j x^j
\]
then 
\begin{eqnarray*}
\norm{f}^2  &\= & \int_0^1 \sum_{i,j=0}^n a_i \overline{a_j} x^{i+j} dx \\
&=& \sum_{i,j=0}^n a_i \overline{a_j} \frac{1}{i+j+1} .
\end{eqnarray*}
Likewise, as
\[
(1-H) f(x) \= \sum_{j=0}^n \frac{j}{j+1} a_j x^j,
\]
\[
\norm{(1-H)f}^2 \=     \sum_{i,j=0}^n a_i \overline{a_j}\frac{ij}{(i+1)(j+1)(i+j+1)}.
\]
Hence
\begin{align*}
\norm{f}^2-\norm{(1-H)f}^2 &\=
\sum_{i,j=0}^n a_i \overline{a_j} \left( \frac{1}{i+j+1} - \frac{ij}{(i+1)(j+1)(i+j+1)}\right)\\
&\= 
\sum_{i,j=0}^n a_i \overline{a_j} \left( \frac{1}{(i+1)(j+1)} \right)\\
&\=|\int_0^1 f(x)dx|^2.
\end{align*}
\end{proof}
\begin{proof}
We now complete the proof of Proposition \ref{hard.prop.10}.
We want to prove
that $\| H \| \leq 2$.
By Lemma  \ref{lemd1}, 
 $\norm{(1-H)}\le 1$. Therefore,
\[
\norm{H} = \norm {1 -(1-H)} \le \norm{1} + \norm{1-H }\le 1+1 =2.
\]
\end{proof}
\section{An asymptotic M\"untz-Sz\'asz Theorem}
\label{sece}

Let $S$ be a subset of the non-negative integers. When is the linear span of the monomials $\{ x^n : n \in S \}$ dense?
The  M\"untz-Sz\'asz Theorem, proved by M\"untz \cite{mu14} and Sz\'asz \cite{sza16}, answers this question in both $L^2[0,1]$ and $C[0,1]$, the continuous functions on $[0,1]$. The answer is basically the
same in both cases, but the constant function $1$ plays a special role in $C[0,1]$, since it cannot be
approximated by any polynomial that vanishes at $0$ (which all the other monomials do).
\begin{thm}\label{assym.thm.5}
{\bf (M\"untz-Sz\'asz Theorem)}
(i) The linear span of $ \{ x^n : n \in S \}$ is dense in $L^2[0,1]$ if and only if
\be
\label{eqe10}
\sum_{n \in S} \frac{1}{n+1} \= \infty .
\ee
(ii) The linear span of  $\{ x^n : n \in S \}$ is dense in $C[0,1]$ if and only if $0 \in S$ and \eqref{eqe10} holds.
\end{thm}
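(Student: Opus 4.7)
The plan is to apply the Hilbert space distance formula (Theorem \ref{gram.thm.10}) directly, exploiting that the Gram matrices of monomial systems in $\ltwo$ are Cauchy matrices whose determinants admit a classical closed form.

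First I would observe that, since the polynomials are dense in $\ltwo$, density of $\spn\{x^n : n \in S\}$ in $\ltwo$ is equivalent to each monomial $x^m$ ($m\in\bbn$) lying in the closure of this span. For a finite subset $\{n_1,\ldots,n_N\} \subset S$ not containing $m$, Theorem \ref{gram.thm.10} expresses
\[
\dist(x^m, \spn\{x^{n_1},\ldots,x^{n_N}\})^2 \= \frac{\det G[x^m, x^{n_1},\ldots,x^{n_N}]}{\det G[x^{n_1},\ldots,x^{n_N}]},
\]
and each Gram matrix has entries $\ip{x^i}{x^j} = \frac{1}{i+j+1}$, which is the Cauchy form $\frac{1}{a_i+b_j}$ with $a_i = b_i = k_i + 1/2$. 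Applying the Cauchy determinant formula
\[
\det\left[\tfrac{1}{a_i+b_j}\right]_{i,j=1}^{N} \= \frac{\prod_{i<j}(a_j-a_i)(b_j-b_i)}{\prod_{i,j}(a_i+b_j)}
\]
to both numerator and denominator and cancelling yields the clean expression
\[
\dist(x^m, \spn\{x^{n_1},\ldots,x^{n_N}\})^2 \= \frac{1}{2m+1}\prod_{k=1}^{N}\left(\frac{n_k-m}{n_k+m+1}\right)^2.
\]

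The final step for (i) is to translate this product into the sum condition \eqref{eqe10}. Writing $\frac{n_k-m}{n_k+m+1} = 1 - \frac{2m+1}{n_k+m+1}$ for $n_k \ge m$, the standard criterion that $\prod(1-a_k) \to 0$ iff $\sum a_k = \infty$ (for $a_k \in [0,1)$) shows that the infimum of the above product over finite $S' \subset S$ is zero iff $\sum_{n\in S} \frac{1}{n+m+1} = \infty$, equivalently $\sum_{n\in S}\frac{1}{n+1}=\infty$. Both directions of (i) follow at once: divergence makes every $x^m$ approximable, while convergence leaves $x^m$ at positive distance from the span for any $m\notin S$ (and such $m$ must exist, since otherwise $S = \bbn$ and the sum already diverges).

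For (ii), the necessity of $0 \in S$ is immediate since every polynomial in the remaining monomials vanishes at $0$ and so cannot uniformly approximate constants. For sufficiency I would pass to duality via Hahn--Banach: any annihilating Borel measure $\mu$ on $[0,1]$ defines $F(z) = \int_0^1 x^z\, d\mu$, bounded and analytic on $\{\Re z > 0\}$, whose zero set contains $S$; a Blaschke-type condition equivalent to \eqref{eqe10} then forces $F \equiv 0$, and hence $\mu = 0$ by a Mellin-type uniqueness step. The main obstacle is the explicit Cauchy determinant computation, which requires careful bookkeeping of the factors involving the appended index $m$; the product-to-sum translation is routine, and the $C[0,1]$ half requires the additional duality argument above, which falls outside the Hilbert space framework of the paper.
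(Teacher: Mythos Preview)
The paper does not give its own proof of Theorem \ref{assym.thm.5}; the result is stated as classical with citations to \cite{mu14} and \cite{sza16}, so there is no in-paper proof to compare against directly. That said, your part (i) is precisely M\"untz's original argument, which the paper explicitly invokes (``We shall follow M\"untz's original proof of Theorem \ref{assym.thm.5}'') and then carries out in the special case needed for Theorem \ref{assym.thm.10}: the paper's displayed computation of $\dist(1,\m_n)^2$ via Theorems \ref{gram.thm.10} and \ref{Cauchy} is exactly your formula
\[
\dist(x^m,\spn\{x^{n_1},\dots,x^{n_N}\})^2 \= \frac{1}{2m+1}\prod_{k=1}^N\Big(\frac{n_k-m}{n_k+m+1}\Big)^2
\]
with $m=0$ and $\{n_1,\dots,n_N\}=\{n,\dots,n+N_n\}$. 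Your reduction to approximating monomials and the product-to-sum step are correct and standard.

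For part (ii), your Hahn--Banach/Blaschke sketch is a legitimate route (closer in spirit to Sz\'asz), but the paper hints at a different and more self-contained one. At the start of Section 6 it remarks that ``The $C[0,1]$ M\"untz--Sz\'asz Theorem can be deduced from the $L^2$ version,'' and the proof of Theorem \ref{assym.thm.11} shows the mechanism: the Volterra operator $V$ maps $\ltwo$ boundedly into $C[0,1]$, so $L^2$ approximants to $f'$ become uniform approximants to $f=Vf'$. Running that device here---after subtracting $f(0)$ (which is where $0\in S$ is used), approximate $f'$ in $L^2$ from $\spn\{x^{n-1}:n\in S,\ n\ge 1\}$, a set still satisfying \eqref{eqe10}, and apply $V$---yields sufficiency in (ii) entirely within the paper's Hilbert space toolkit, without the analytic-function machinery you flag as external.
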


What happens if the approximants come from a set of linear combinations of monomials that is losing as well as gaining members?
Fix an increasing sequence $\{N_n\}$ of natural numbers and for each $n$ define
\[
S_n=\{n,n+1,\ldots,n+N_n\} \qquad \text{and}
\]
\[
\m_n= \spn \  \{x^{n},x^{n+1},\ldots,x^{n+N_n}\}.
\]
For each $n$ let $\rho_n$ denote the fraction of the non-negative integers less than or equal to $n+N_n$ that do not lie in $S_n$, i.e.,
\[
\rho_n = \frac{n}{n+N_n + 1}.
\]
Finally, with this setup, let
\be\label{assym.10}
\m_\infty = \set{f\in \ltwo}{\lim_{n\to \infty} \dist (f,\m_n) = 0}.
\ee

We wish to characterize $\m_\infty$ (Theorem \ref{assym.thm.10} below). We shall 
 follow M\"untz's original proof of Theorem \ref{assym.thm.5}
\cite{mu14}. 
 His argument involved an ingenious calculation using Theorem \ref{gram.thm.10} and the following venerable formula of Cauchy \cite{cau41}.

\begin{thm}\label{Cauchy}{\bf (The Cauchy Determinant Formula)}
If $M$ is the $N \times N$ Cauchy matrix defined by the formula
\[
M=
\begin{bmatrix}
\frac{1}{x_1-y_1}&\frac{1}{x_1-y_2}&\ldots&\frac{1}{x_1-y_N}\\
\frac{1}{x_2-y_1}&\frac{1}{x_2-y_2}&\ldots&\frac{1}{x_2-y_N}\\
\vdots&\vdots&\ddots&\vdots\\
\frac{1}{x_N-y_1}&\frac{1}{x_N-y_2}&\ldots&\frac{1}{x_N-y_N}
\end{bmatrix},
\]
where for all $i$ and $j$, $x_i \not= y_j$, then
\[
\det M = \frac{\prod\limits_{1\le j<i \le N}(x_i-x_j)(y_j-y_i)}{\prod\limits_{1\le i,j\le N}(x_i-y_j)}.
\]
\end{thm}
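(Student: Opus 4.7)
The plan is to prove the formula by induction on $N$, using row and column reductions to peel off one row and one column at each stage. The base case $N=1$ is immediate, since both sides equal $1/(x_1 - y_1)$ (with empty products interpreted as $1$).

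For the inductive step, I would first subtract the last row of $M$ from each of the first $N-1$ rows. The typical entry in row $i < N$ becomes
\[
\frac{1}{x_i - y_j} - \frac{1}{x_N - y_j} \= \frac{x_N - x_i}{(x_i - y_j)(x_N - y_j)},
\]
which lets me factor $(x_N - x_i)$ out of row $i$ and $1/(x_N - y_j)$ out of column $j$, producing a matrix whose last row is $(1,1,\ldots,1)$ and whose remaining entries are $1/(x_i - y_j)$. Next, I would subtract the last column from each of the first $N-1$ columns and apply the symmetric manipulation, factoring $(y_j - y_N)$ out of column $j < N$ and $1/(x_i - y_N)$ out of each of the first $N-1$ rows. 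The resulting matrix has the Cauchy matrix in the variables $x_1,\dots,x_{N-1}, y_1,\dots,y_{N-1}$ sitting in the upper-left $(N-1)\times(N-1)$ block, a column $(1,\dots,1,1)^T$ in the last column, and a last row equal to $(0,\dots,0,1)$. Expanding along this last row yields $\det M_{N-1}$ times the accumulated scalar factors.

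The remaining work is bookkeeping: I would check that the scalars pulled out are exactly the terms needed to extend the inductive formula's index ranges from $1 \leq i,j \leq N-1$ to $1 \leq i,j \leq N$. Specifically, the numerator gains $\prod_{i<N}(x_N - x_i)$ and $\prod_{j<N}(y_j - y_N)$, which combine with the inductive numerator to form $\prod_{1\le j<i\le N}(x_i-x_j)(y_j-y_i)$; and the denominator picks up $\prod_{j=1}^N (x_N - y_j)$ together with $\prod_{i=1}^{N-1}(x_i - y_N)$, which are precisely the entries missing from $\prod_{1\le i,j\le N-1}(x_i - y_j)$ to complete it to $\prod_{1\le i,j\le N}(x_i - y_j)$.

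The main obstacle is entirely computational: tracking signs and indices through the two rounds of row and column operations and confirming that the products reassemble correctly. There is no deeper analytic or algebraic subtlety, since the only algebraic input is the single telescoping identity $\frac{1}{a} - \frac{1}{b} = \frac{b-a}{ab}$ applied in both the row and column directions.
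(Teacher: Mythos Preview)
Your inductive argument via row and column reduction is correct and is one of the standard textbook proofs of the Cauchy determinant formula; the telescoping identity $\tfrac{1}{a}-\tfrac{1}{b}=\tfrac{b-a}{ab}$ does exactly the work you describe, and your bookkeeping of the scalar factors is accurate. Note, however, that the paper does not actually prove Theorem~\ref{Cauchy}: it simply quotes the formula as a classical result and cites Cauchy's original 1841 paper, so there is no in-paper proof to compare against.
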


We need two more auxiliary results. 

\begin{prop}\label{leme5} {\bf (Baby Brodskii-Donoghue Theorem)}
Let $\M$ be a closed subspace of $L^2[0,1]$ that is invariant under both $M_x$ and $V$.
Then $\M = L^2[s,1]$ for some $s$ between $0$ and $1$.
\end{prop}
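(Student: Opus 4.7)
The plan is to deduce from $M_x$-invariance that $\M$ must have the form $\chi_E L^2[0,1]$ for some measurable $E \subseteq [0,1]$, and then use $V$-invariance to force $E$ to be (essentially) of the form $[s,1]$.

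For the first step, closedness of $\M$ together with $M_x$-invariance gives $pf \in \M$ for every polynomial $p$ and every $f \in \M$. By the Weierstrass approximation theorem this extends to $gf \in \M$ for every $g \in C[0,1]$. To extend further to arbitrary $g \in L^\infty[0,1]$, I would use Lusin's theorem to produce continuous functions $g_n$ with $\|g_n\|_\infty \leq \|g\|_\infty$ converging to $g$ almost everywhere, then invoke the dominated convergence theorem to see $g_n f \to gf$ in $L^2[0,1]$, so that $gf \in \M$ by closedness. Thus $\M$ is invariant under the maximal abelian subalgebra $\{M_g : g \in L^\infty[0,1]\}$ of $B(L^2[0,1])$. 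Since this algebra is its own commutant, and since $\M^\perp$ is also invariant (using $M_g^* = M_{\bar g}$), the orthogonal projection onto $\M$ is itself a multiplication operator $M_h$. The projection identity $h^2 = h$ forces $h = \chi_E$ for some measurable $E \subseteq [0,1]$, and hence $\M = \chi_E L^2[0,1]$.

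For the second step, apply $V$ to $\chi_E$, which lies in $\M$: the function $V\chi_E(x) = |E \cap [0,x]|$ must lie in $\chi_E L^2[0,1]$, so it must vanish almost everywhere on $[0,1] \setminus E$. Setting $s := \inf\{t \in [0,1] : |E \cap [0,t]| > 0\}$, the vanishing condition forces $[0,1] \setminus E \subseteq [0,s]$ up to a null set, while the definition of $s$ gives that $E \cap [0,s]$ is null. Hence $E$ and $[s,1]$ agree up to a null set, so $\M = L^2[s,1]$.

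The main obstacle is the first step, specifically the passage from polynomial multipliers to bounded measurable multipliers and the appeal to the fact that $L^\infty[0,1]$ acts as a maximal abelian von Neumann algebra on $L^2[0,1]$. Once the structural identification $\M = \chi_E L^2[0,1]$ is in hand, the $V$-invariance step is an elementary measure-theoretic consequence.
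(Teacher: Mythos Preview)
Your argument is correct and follows the same two–step outline as the paper: first use $M_x$–invariance to show $\M=\chi_E L^2[0,1]$ for some measurable $E$, then use $V$–invariance applied to $\chi_E$ to force $E=[s,1]$ up to a null set. The second step is essentially identical to the paper's.

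The difference lies in how the first step is carried out. The paper avoids Lusin's theorem and the von~Neumann algebra fact that $L^\infty[0,1]$ is maximal abelian on $L^2[0,1]$. Instead it writes $1=f+g$ with $f\in\M^\perp$, $g\in\M$, observes that $M_x$–invariance (and self–adjointness of $M_x$) makes $pg\in\M$ and $pf\in\M^\perp$ for every polynomial $p$, and then notes that $f\perp pg$ for all $p$ forces $f\bar g=0$ a.e.; combined with $f+g=1$ this yields $f=\chi_F$, $g=\chi_G$ for a measurable partition $F\cup G=[0,1]$. This is more elementary and entirely self–contained. Your route, by contrast, imports standard operator–algebra machinery, which makes the structural conclusion ``every closed $M_x$–invariant subspace is $\chi_E L^2[0,1]$'' explicit and transparent, at the cost of citing a heavier background fact. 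One small point: your Lusin step needs the $\epsilon_n$ summable (or the direct estimate $\|g_nf-gf\|_2^2\le 4\|g\|_\infty^2\int_{\{g_n\neq g\}}|f|^2\to 0$) to guarantee convergence, since Lusin alone does not give a.e.\ convergence of the approximants; this is routine but worth stating.
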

\begin{proof}
Note that the constant function 1 has the unique representation in $\ltwo$,
\be\label{assym.40}
1=f+g
\ee
where $f \perp \M$ and $g \in \M$. The fact that $\M$ is $M_x$ invariant, implies that $pg \in \M$ whenever $p$ is a polynomial\footnote{Note that it is also true that $pf \in \M^\perp$ whenever $p$ is a polynomial, since $M_x$ is self-adjoint.},
 it follows that $f \perp pg$ whenever $p$ is a polynomial. But then $f\bar g \perp p$ for all polynomials which implies that
\be\label{assym.50}
f\bar g =0 
\ee

For a Lesbesgue measurable set $E \subseteq [0,1]$ we let $\chi_E$ denote the characteristic function of $E$, i.e., the function  defined by
\[
\chi_E(x)=\twopartdef{0}{x\not\in E}{1}{x \in E}.
\]
We observe that \eqref{assym.40} and \eqref{assym.50} imply that there exists a partition of $[0,1]$ into two measurable sets $F$ and $G$ such that $f=\chi_F $ and $g=\chi_G$. We define a parameter $s\in[0,1]$ by setting
\be\label{assym.55}
s = \sup \set{x}{g(t) =0 \text{ for a.e. } t \in [0,x]}.
\ee
Notice that with this definition, we have that
\be\label{assym.60}
Vg(x) = 0 \text{ for a.e. }\in [0,s]\ \ \ \ \text{ and }\ \ \ \ 
Vg(x) > 0 \text{ for a.e. }\in [s,1].
\ee

Since $g \in \M$, we have  $Vg \in \M$. Also, recall that $f\in \M^\perp$. Therefore, using \eqref{assym.60} we see that $F \subseteq [0,s]$. In light of \eqref{assym.40}, this implies $[s,1]\subseteq G$, which in turn, implies via \eqref{assym.55} that
\be\label{assym.70}
f=\chi_{[0,s]}\ \ \text{ and }\ \ g=\chi_{[s,1]}.
\ee

As $pf \in \M^\perp$ and $pg \in \M$ whenever $p$ is a polynomial, it follows immediately from \eqref{assym.70} and the fact that the polynomials are dense in both ${{\rm L}^2([0,s])}$ and $\ltwos$, that
\[
{{\rm L}^2([0,s])} \subseteq \M^\perp \ \ \text{ and }\ \ \
\ltwos \subseteq \M.
\]
Hence, we have that both ${{\rm L}^2([s,1])} \supseteq \M $ and $\ltwos \subseteq \M$, so that $\ltwos = \M$, as was to be proved.
\end{proof}

We call Proposition \ref{leme5} the Baby Brodskii-Donoghue Theorem
because Brodskii and Donoghue independently proved the far deeper fact that the only closed invariant subspaces of $V$
are $L^2[s,1]$ \cite{br57,don57}. The operator $M_x$ has other invariant subspaces. Indeed the ideas in the  preceding proof 
can be adapted to show that
the invariant subspaces of $M_x$ are the spaces $\{ f \in L^2[0,1] : f(x) = 0 {\rm\ a.e.\ on\ }F\}$, where
$F$ is any measurable subset of $[0,1]$.

\begin{lem}\label{assym.lem.10}
If $\m_\infty$ is as in \eqref{assym.10}, then there exists $s\in[0,1]$ such that $\m_\infty = \ltwos$.
\end{lem}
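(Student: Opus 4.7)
The plan is to reduce the lemma to Proposition \ref{leme5} by showing that $\m_\infty$ is a closed linear subspace of $\ltwo$ that is invariant under both $M_x$ and $V$. Once those three properties are established, the Baby Brodskii--Donoghue Theorem delivers the conclusion immediately.

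That $\m_\infty$ is a linear subspace is immediate from linearity of the distance function. For closedness, I would run a standard $\varepsilon/2$ argument: if $f_k \to f$ in $\ltwo$ with each $f_k \in \m_\infty$, then the triangle inequality for distance gives
\[
\dist(f,\m_n) \ \le\ \norm{f - f_k} + \dist(f_k, \m_n),
\]
so choosing $k$ large to control the first term and then $n$ large to control the second yields $\dist(f,\m_n)\to 0$.

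The heart of the argument is the invariance. The key observation is that both $M_x$ and $V$ shift the spans in a controlled way: since $M_x x^j = x^{j+1}$ and $V x^j = \frac{1}{j+1} x^{j+1}$, for $p \in \m_n = \spn\{x^n,\ldots,x^{n+N_n}\}$ one has
\[
M_x p,\ Vp\ \in\ \spn\{x^{n+1},\ldots,x^{n+1+N_n}\}\ \subseteq\ \m_{n+1},
\]
where the containment uses the \emph{monotonicity hypothesis} $N_{n+1}\ge N_n$. Given $f\in\m_\infty$ and near-best approximants $p_n\in\m_n$ with $\norm{f-p_n}\to 0$, boundedness of $M_x$ and $V$ on $\ltwo$ yields $\norm{M_x f - M_x p_n}\to 0$ and $\norm{Vf - Vp_n}\to 0$, and since $M_x p_n, V p_n\in\m_{n+1}$ this gives $\dist(M_x f,\m_{n+1})\to 0$ and $\dist(Vf,\m_{n+1})\to 0$. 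Hence $M_x f, Vf\in\m_\infty$.

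With invariance and closedness in hand, Proposition \ref{leme5} applies directly and yields $\m_\infty = \ltwos$ for some $s\in[0,1]$. I do not anticipate a serious obstacle; the only subtle point is to remember that the monotonicity of $\{N_n\}$ is precisely what makes the shift $\m_n \mapsto \m_{n+1}$ close up, so invariance would fail for arbitrary sequences $\{N_n\}$.
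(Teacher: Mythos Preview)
Your proof is correct and follows essentially the same route as the paper: show $M_x\m_n\subseteq\m_{n+1}$ and $V\m_n\subseteq\m_{n+1}$ (using that $\{N_n\}$ is increasing), deduce invariance of $\m_\infty$, and invoke Proposition~\ref{leme5}. You are in fact more thorough than the paper, which leaves the verification that $\m_\infty$ is closed to the reader.
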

\begin{proof}
Observe first that if
\[
p(x) = \sum_{k=n}^{n+N_n}a_k x^k \in \m_n,
\]
then
\[
M_xp(x)=xp(x) = \sum_{k=n+1}^{n+N_n+1}a_{k-1} x^{k} \in \m_{n+1}.
\]
Hence,
\[
M_x \m_\infty \subseteq \m_\infty.
\]
 Likewise,
\[
V\m_\infty \subseteq \m_\infty.
\]
Now the result follows from Lemma \ref{leme5}.
\end{proof}
\begin{thm}\label{assym.thm.10}
{\bf (Asymptotic M\"untz-Sz\'asz Theorem)}
Let $S_n=\{n,n+1,\ldots,n+N_n\}$, let $\m_n= \spn \  \{x^{n},x^{n+1},\ldots,x^{n+N_n}\}$, and let $
\rho_n = \frac{n}{n+N_n + 1}.
$
If 
\[
\lim_{n \to \infty} \rho_n = \rho,
\]
then
\[
\m_\infty = {{\rm L}^2([\rho^2,1])}.
\]
\end{thm}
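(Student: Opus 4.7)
The plan is to combine the Hilbert Space Distance Formula (Theorem~\ref{gram.thm.10}) with Cauchy's Determinant Formula (Theorem~\ref{Cauchy}) to compute the inner products $\langle P_{\m_n} x^k, x^m\rangle$ explicitly and pass to the limit. By Lemma~\ref{assym.lem.10}, we already know $\m_\infty = L^2[s,1]$ for some $s \in [0,1]$; the task reduces to pinning down $s = \rho^2$.

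The key calculation exploits the fact that every Gram matrix in sight has entries $\langle x^i, x^j\rangle = \frac{1}{i+j+1}$, which is Cauchy. Applying Cauchy's formula to the $(N_n+2)\times(N_n+2)$ augmented Cauchy matrix $M_{m,k}$ whose rows are indexed by $\{m\}\cup S_n$ and columns by $\{k\}\cup S_n$, and using the Schur complement identity to relate $\det M_{m,k}/\det G_n$ to the bilinear form $(v^{(m)})^T G_n^{-1} v^{(k)} = \langle P_{\m_n} x^k, x^m\rangle$, the Vandermonde factors and the $\prod_{i,j\in S_n}(i+j+1)$ denominator cancel almost completely, leaving
\[
\langle P_{\m_n} x^k, x^m\rangle \ =\ \frac{1}{m+k+1}\left(1 - \prod_{j=n}^{n+N_n}\frac{(j-m)(j-k)}{(j+m+1)(j+k+1)}\right).
\]
Each ``half-product'' $\prod_{j=n}^{n+N_n}\frac{j-k}{j+k+1}$ telescopes to a ratio of factorials $\frac{(n+k)!\,(n+N_n-k)!}{(n-k-1)!\,(n+N_n+k+1)!}$, and elementary estimates on products of $2k+1$ consecutive integers near $n$ versus near $n+N_n$ give the limit $\rho^{2k+1}$. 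Hence the full product tends to $\rho^{2(m+k+1)}$ and
\[
\langle P_{\m_n} x^k, x^m\rangle \ \longrightarrow\ \frac{1-\rho^{2(m+k+1)}}{m+k+1} \ =\ \int_{\rho^2}^1 x^{m+k}\,dx \ =\ \langle x^k \chi_{[\rho^2,1]}, x^m\rangle.
\]

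The theorem then follows in short order. Because polynomials are dense in $L^2[0,1]$ and $\|P_{\m_n} x^k\| \le \|x^k\|$, the limit above yields weak convergence $P_{\m_n} x^k \to x^k \chi_{[\rho^2,1]}$; setting $m=k$ gives norm convergence $\|P_{\m_n} x^k\|\to \|x^k \chi_{[\rho^2,1]}\|$, which upgrades the weak convergence to strong convergence in $L^2$. Thus $x^k \chi_{[\rho^2,1]} \in \m_\infty$ for every $k\ge 0$, and since these functions span a dense subspace of $L^2[\rho^2,1]$ we obtain $L^2[\rho^2,1] \subseteq \m_\infty$, i.e.\ $s \le \rho^2$. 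For the reverse inequality, pick $r_n \in \m_n$ with $r_n \to x^k \chi_{[s,1]}$ (possible since $x^k \chi_{[s,1]} \in L^2[s,1]=\m_\infty$); then $\dist(x^k, \m_n) \le \|x^k - r_n\| \to \|x^k \chi_{[0,s]}\|$, which combined with $\dist(x^k,\m_n)^2 \to \frac{(\rho^2)^{2k+1}}{2k+1}$ (the case $m=k$ of the formula above) forces $\rho^2 \le s$. The main obstacle is the Cauchy-determinant bookkeeping in the middle paragraph: one must verify that the ratio $\det M_{m,k}/\det G_n$ collapses exactly to the stated product, so that everything not depending on $m$ or $k$ cancels; once that is in hand, the factorial asymptotics are routine.
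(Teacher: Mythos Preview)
Your argument is correct and uses the same ingredients as the paper---Lemma~\ref{assym.lem.10} to reduce to identifying $s$, then the Hilbert space distance formula combined with Cauchy's determinant---but you deploy considerably more of that machinery than the paper does. The paper computes only the single quantity $\dist(1,\m_n)$: with $\alpha_0=0$ the ratio of Cauchy determinants telescopes exactly to $\rho_n^2$, and since $\dist(1,L^2[s,1])=\sqrt{s}$ one reads off $s=\rho^2$ directly. You instead compute the entire bilinear form $\langle P_{\m_n}x^k,x^m\rangle$ via a Schur complement, then upgrade weak-plus-norm convergence to strong convergence of $P_{\m_n}x^k$, and finally sandwich $s$ between two inequalities. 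Your Cauchy bookkeeping is right (the ratio $\det M_{m,k}/\det G_n$ does collapse to the product you wrote), and your asymptotics are fine. What your extra work buys is that you explicitly exhibit $x^k\chi_{[\rho^2,1]}$ as a norm limit of elements of $\m_n$, hence as an element of $\m_\infty$; this supplies the inclusion $L^2[\rho^2,1]\subseteq\m_\infty$ (equivalently $s\le\rho^2$) that the paper's one-line identification $\lim_n\dist(1,\m_n)=\dist(1,\m_\infty)$ takes for granted. On the other hand, note that your general-$(k,m)$ formula is overkill even for your own argument: the case $k=0$ (weak limit of $P_{\m_n}1$ against all monomials $x^m$, plus $\|P_{\m_n}1\|^2=1-\rho_n^2$) already yields $\chi_{[\rho^2,1]}\in\m_\infty$, and your $\rho^2\le s$ step only uses $k=m$.
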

\begin{proof}
By Lemma \ref{assym.lem.10} there exists $s \in [0,1]$ such that
\[
\m_\infty = \ltwos.
\]
Noting that
\[
\dist(1,\ltwos)= \sqrt s,
\]
we see that the theorem will follow if we can show that
\[
\dist(1,\m_\infty) = \rho,
\]
or equivalently that
\be\label{assym.80}
\lim_{n \to \infty} \dist(1,\m_n) = \rho.
\ee
Now fix $N+1$ distinct real numbers $\alpha_0,\alpha_1,\ldots,\alpha_N\in (-\tfrac12,\infty)$. In Theorem \ref{Cauchy} if for $i,j =1,2,\ldots,N$ we let $x_i=\alpha_i+\tfrac12$ and $y_j=-(\alpha_j+\tfrac12)$ we obtain that
\[
\det G(x^{\alpha_1},\ldots,x^{\alpha_N}) = \frac{\prod\limits_{1\le j<i \le N}(\alpha_i-\alpha_j)^2}{\prod\limits_{1\le i,j\le N}(\alpha_i+\alpha_j+1)}.
\]
Likewise,
\[
\det G(x^{\alpha_0},x^{\alpha_1},\ldots,x^{\alpha_N}) = \frac{\prod\limits_{0\le j<i \le N}(\alpha_i-\alpha_j)^2}{\prod\limits_{0\le i,j\le N}(\alpha_i+\alpha_j+1)}.
\]
Therefore,
\[
\frac{\det G(x^{\alpha_0},x^{\alpha_1},\ldots,x^{\alpha_N})}{\det G(x^{\alpha_1},\ldots,x^{\alpha_N})}=\frac{1}{2\alpha_0+1}\ \frac{\prod_{i=1}^N (\alpha_i-\alpha_0)^2}{\prod_{i=1}^N (\alpha_i+\alpha_0+1)^2}
\]
Hence, using Theorem \ref{gram.thm.10} we get
\begin{eqnarray*}
\dist(1,\m_n)^2 &\=& 
\frac{\det G(x^0, x^n, \dots, x^{n+N_n})}{\det G( x^n, \dots, x^{n+N_n})} \\
&=&
\frac{\prod_{i=0}^{N_n} (n+i)^2}{\prod_{i=0}^{N_n} (n+i+1)^2}\\
&=& (\frac{n}{n+N_n+1})^2 \\
&=& \rho_n^2 .
\end{eqnarray*}
Equation \eqref{assym.80} now follows.
\end{proof}

\section{The Bernstein Conundrum: Asymptotic M\"untz-Sz\'asz Theorem for $C[0,1]$}

The $C[0,1]$ M\"untz-Sz\'asz Theorem can be deduced from the $L^2$ version.
What about the asymptotic version?
Let $\m_n$ be as in Section \ref{sece}, and let $\cmi$ be
\be\label{assym.11}
\cmi \= \set{f\in C[0,1]}{\lim_{n\to \infty} \dist (f,\m_n) = 0},
\ee
where in this section all distances are with respect to the supremum norm\footnote{This means $\| f \|_{C[0,1]} = \sup_{0 \leq x \leq 1} |f(x) |$.}.

One way to prove the Weierstra{\ss} approximation theorem is to use the Bernstein polynomials. 
For each $n$, these are the $n+1$ polynomials defined by
\[
b_{k,n} (x) \= {{n} \choose{k}} x^k (1-x)^{n-k} .
\]
Bernstein proved in 1912 \cite{ber12} that for every continuous function $f \in C[0,1]$, the polynomials
\be
\label{eqe16}
p_n (x) \= \sum_{k=0}^n f(\frac{k}{n})\  b_{k,n} (x)
\ee
converge uniformly on $[0,1]$ to $f$.

As the lowest order term in $b_{k,n}$ is $x^k$, if $f$ vanished on $[0,\rho_n]$ and
one used the Bernstein formula \eqref{eqe16} to approximate it, the corresponding polynomial
$p_{n+ N_n + 1}$ would lie in the span of $\{ x^{n+1}, \dots, x^{n+ N_n +1} \}$ which is in $x \m_n \subseteq \m_{n+1}$.
 So one immediately gets that $\cmi$ contains all the continuous
functions that vanish on $[0,\rho]$.

This construction seems natural, and 
 could lead one to suspect that $\cmi$ should be the functions
that vanish on $[0,\rho]$. However, Theorem \ref{assym.thm.11} shows this is incorrect.

\begin{thm}\label{assym.thm.11}
{\bf (Asymptotic M\"untz-Sz\'asz Theorem, Continuous Case)}
If 
\[
\lim_{n \to \infty} \rho_n = \rho,
\]
then
\[
\cmi  \=  \{ f \in C[0,1] | f =0 {\rm\ on \ } [0,\rho^2] \}.
\]
\end{thm}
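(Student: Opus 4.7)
\medskip

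\noindent\textbf{Proof plan for Theorem \ref{assym.thm.11}.}
The plan is to prove the two inclusions separately, using Theorem \ref{assym.thm.10} (the $L^2$ version) as the workhorse and bridging from $L^2$ approximation to uniform approximation through the Volterra operator $V$. The surprise is that $\rho^2$ rather than $\rho$ appears, and the point of the proof is that Bernstein's scheme, which naturally produces vanishing on $[0,\rho]$, is simply not optimal: one can approximate functions supported in the much larger interval $[\rho^2,1]$ by first approximating their $L^2$ derivative and then integrating.

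The easy inclusion $\cmi\subseteq\{f\in C[0,1]:f=0\text{ on }[0,\rho^2]\}$ follows from $\|g\|_{L^2[0,1]}\le\|g\|_\infty$, which implies $\dist_{L^2}(f,\m_n)\le\dist_\infty(f,\m_n)\to 0$. By Theorem \ref{assym.thm.10}, $f\in L^2([\rho^2,1])$, so $f=0$ a.e.\ on $[0,\rho^2]$, and continuity upgrades this to everywhere.

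For the reverse inclusion, I would proceed in three steps. First, $\cmi$ is closed in the sup norm, by the triangle inequality $\dist_\infty(f,\m_n)\le\|f-f_k\|_\infty+\dist_\infty(f_k,\m_n)$. Second, suppose $\tilde f\in C[0,1]$ is absolutely continuous with $\tilde f(0)=0$, $\tilde f'\in L^2[0,1]$, and $\tilde f'=0$ a.e.\ on $[0,\rho^2]$. Then $\tilde f'\in L^2([\rho^2,1])$, so by Theorem \ref{assym.thm.10} there exist $q_n\in\m_n$ with $\|q_n-\tilde f'\|_{L^2}\to 0$. Since $\{N_n\}$ is increasing,
\[
Vq_n\in\spn\{x^{n+1},\ldots,x^{n+N_n+1}\}\subseteq\m_{n+1},
\]
and $V\tilde f'=\tilde f$ because $\tilde f(0)=0$. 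Cauchy--Schwarz gives
\[
\|Vq_n-\tilde f\|_\infty=\|V(q_n-\tilde f')\|_\infty\le\|q_n-\tilde f'\|_{L^2}\longrightarrow 0,
\]
so $\tilde f\in\cmi$.

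Third, I need to show that functions of the above type are sup-norm dense in $\{f\in C[0,1]:f=0\text{ on }[0,\rho^2]\}$; together with closedness this will finish the proof. Given such an $f$, set $\hat f(y)=f(y+\rho^2)$ on $[0,1-\rho^2]$; by Weierstrass there is a polynomial $q$ with $\|q-\hat f\|_\infty<\varepsilon$, and subtracting $q(0)$ yields $q_1$ with $q_1(0)=0$ and $\|q_1-\hat f\|_\infty<2\varepsilon$. Define $\tilde f$ by $\tilde f(x)=q_1(x-\rho^2)$ on $[\rho^2,1]$ and $\tilde f\equiv 0$ on $[0,\rho^2]$; this function is continuous, piecewise polynomial, hence absolutely continuous with bounded (so $L^2$) derivative vanishing on $[0,\rho^2]$, and $\|f-\tilde f\|_\infty<2\varepsilon$. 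The main obstacle is the structural observation that $V\m_n\subseteq\m_{n+1}$ combined with the $L^2$-to-sup-norm boundedness of $V$; once one sees that this reduces the sup-norm problem to the $L^2$ problem applied to the derivative, the rest is routine.
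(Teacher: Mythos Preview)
Your proposal is correct and follows essentially the same route as the paper's proof: the easy inclusion via $\|\cdot\|_{L^2}\le\|\cdot\|_\infty$ together with Theorem \ref{assym.thm.10}, and the hard inclusion by applying the Volterra operator to $L^2$-approximants of the derivative, using that $V$ is bounded from $L^2[0,1]$ into $C[0,1]$ and that $V\m_n\subseteq\m_{n+1}$, then closing up by density of smooth functions vanishing on $[0,\rho^2]$. The paper uses $C^1$ functions where you use absolutely continuous ones with $L^2$ derivative, and it asserts the density step without spelling it out, but the architecture is identical.
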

\begin{proof}
As the supremum norm is larger than the $L^2$ norm, we have
\begin{eqnarray*}
\cmi  &\ \subseteq \  &\m_\infty \cap C[0,1] \\
&=& \{ f \in C[0,1] | f =0 {\rm\ on \ } [0,\rho^2] \}.
\end{eqnarray*}
For the reverse inclusion, notice that it follows from Cauchy-Schwarz that the Volterra
operator is a bounded linear map from $L^2[0,1]$ into $C[0,1]$. (Indeed, if $g \in L^2[0,1]$, we get
that $Vg$ satisfies a H\"older continuity condition of order $\frac{1}{2}$.)
  
Let $f$ be a $C^1$ function that vanishes on $[0,\rho^2]$. Then $f = Vg$, where $g = f'$. 
By Theorem \ref{assym.thm.10}, there are polynomials $p_n \in \m_n$ that
converge in $L^2$ to $g$. Then $V p_n$ converges in $C[0,1]$ to $f$, so $f$ is in $\cmi$. As $\cmi$ is closed, and
the $C^1$ functions that vanish on $[0,\rho^2]$ are dense in the continuous ones, we get 
\[
\{ f \in C[0,1] | f =0 {\rm\ on \ } [0,\rho^2] \} \ \subseteq \ \cmi .
\]
\end{proof}
\begin{ques} Can one prove Theorem \ref{assym.thm.11} directly using Bernstein approximation?
\end{ques}

\section{Proof of Theorem \ref{thm.over.10}}
\label{secf}
\begin{proof}
Assume that $T$ is a monomial operator of order $m$, let $s \in (0,1)$, and fix $f\in \ltwos$. We wish to prove that  $Tf \in \ltwos$.

Choose an increasing sequence of natural numbers $\{N_n\}$ such that
\[
\lim_{n \to \infty} \frac{n}{n+N_n} = \sqrt s.
\]
By Theorem \ref{assym.thm.10}, there exists a sequence of polynomials $\{p_n\}$ where for each $n$, $p_n$ has the form
\[
p_n(x)=\sum_{k=n}^{n+N_n}c_k x^k
\]
and such that
\[
p_n \to f\ \text{ in }\ \ltwo.
\]
As $T$ is bounded,
\[
Tp_n \to Tf\ \text{ in }\ \ltwo.
\]
Also, as $T$ is a monomial operator of order $m$, for each $n$, $Tp_n$ has the form
\[
Tp_n(x)= x^m \sum_{k=n}^{n+N_n}d_k x^k.
\]
For any $s > 0$, multiplication by $x^m$ is a bounded invertible map from $\ltwos$ to itself.
Therefore
\[
\sum_{k=n}^{n+N_n}d_k x^k \] converges, as $n \to \infty$, to some function $g(x)$, which
by   Theorem \ref{assym.thm.10} is in $\ltwos$.
So $Tf = x^m g(x)$, and lies in
in $\ltwos$.
\end{proof}


\bibliography{../references_uniform_partial}
\end{document}